\newcommand{\norm}[1]{\|#1\|}
\subjclass{91C20, 91D25, 91D30, 93D50, 94C15}
\keywords{Mixed Hegselmann-Krause dynamics, Hegselmann-Krause model, Deffuant model, social network, infinite graphs}
\title{Mixed Hegselmann-Krause Dynamics\\ on infinite graphs}
\author{Hsin-Lun Li}
\date{}
\email{hsinlunl@asu.edu}
\theoremstyle{definition}
\newtheorem{theorem}{Theorem}
\newtheorem{lemma}[theorem]{Lemma}
\begin{document}

\allowdisplaybreaks

\thispagestyle{firstpage}
\maketitle
\begin{center}
    Hsin-Lun Li
\end{center}

\begin{abstract}
    The mixed Hegselmann-Krause (HK) model covers the synchronous Hegselmann-Krause model, the asynchronous Hegselmann-Krause model and the Deffuant model. Previous study~\cite{mHK, mHK2} deals with the mixed HK model on finite graphs. In the study, we discuss the mixed HK model on infinite graphs which also covers the HK model and the Deffuant model on infinite graphs. We investigate conditions under which asymptotic stability holds or any two vertices in the same component approaches each other after some finite time.
\end{abstract}

\section{Introduction}
The Hegselmann-Krause (HK) model and the Deffuant model are well-known confidence threshold models in continuous opinion dynamics. Both set a confidence threshold $\epsilon>0$ to distinguish whether two individuals are opinion neighbors or not. Two individuals are opinion neighbors if their opinion distance is up to $\epsilon.$ In certain discrete opinion models, such as the threshold voter model discussed in~\cite{andjel1992clustering, durrett1992multicolor, durrett1993fixation, liggett1994coexistence}, individuals adhere to a threshold, denoted by $\tau$. At this threshold, they update their opinions at a rate of one in continuous time and change their opinion if and only if the number of opponents in their neighborhood is at least $\tau$. The model proposed by the author in~\cite{li2024imitation} is akin to the threshold voter model, with the distinction that individuals change their opinions only when the number of opponents in their neighborhood constitutes the majority. In the HK model, an individual updates its opinion by taking the average opinion of its opinion neighbors. In the synchronous HK model, all individuals update their opinion at each time step. In contrast, in the asynchronous model, only one uniformly selected individual updates its opinion at each time step. In the Deffuant model, a pair of socially connected individuals are uniformly chosen at each time step and approach each other equally at a rate $\mu\in (0,1/2]$ if and only if their opinion distance is up to $\epsilon.$ The author in~\cite{mHK} proposed the mixed HK model and argued that it covers the synchronous HK model. In the sequel~\cite{mHK2}, the author further argued that it covers the asynchronous HK model and the Deffuant model. There are two modes for the mixed HK model: pair interaction and group interaction. For pair interaction, the update depends on interacting pairs, whereas for group interaction, the update depends on the non-absolutely stubborn individuals. We previously studied the mixed HK model on finite graphs. Now, we study the mixed HK model on infinite graphs. Namely, the number of individuals is infinite. Since individuals are countable, we set the vertex set as $\mathbb{Z}^+.$  Inheriting the spirit of the mixed HK model on finite graphs, we have the following definitions for mixed HK model on infinite graphs. A \emph{social graph} $G=(\mathbb{Z}^+, E)$ is an undirected graph with vertex set and edge set
$$\mathbb{Z}^+\ \hbox{and}\ E=\{(i,j)\in (\mathbb{Z}^+)^2: i\neq j\ \hbox{and}\   \hbox{vertices $i$ and $j$ are socially connected}\},$$ and all vertices are of finite degree. This indicates that all individuals have a finite number of social neighbors. A \emph{social graph for update} at time $t$, $\Tilde{G}(t)=(\mathbb{Z}^+,\Tilde{E}(t))$, is a subgraph of the social graph for the opinion update. An \emph{opinion graph} at time $t$, $\mathscr{G}(t)=(\mathbb{Z}^+,\mathscr{E}(t))$, is an undirected graph with vertex set and edge set
$$\mathbb{Z}^+\ \hbox{and}\ \mathscr{E}(t)=\{(i,j)\in (\mathbb{Z}^+)^2: i\neq j\ \hbox{and}\ \norm{x_i(t)-x_j(t)}\leq \epsilon\}.$$ An opinion graph $\mathscr{G}$ is \emph{$\delta$-trivial} if any two vertices in $\mathscr{G}$ are at a distance of at most $\delta$ apart. A \emph{profile} at time $t$, $\tilde{G}(t)\cap \mathscr{G}(t)$, is the intersection of the social graph for update and the opinion graph at time $t$. Although an individual may have an infinite number of opinion neighbors, the number of social and opinion neighbors of each individual is finite. Unlike the Deffuant model where only one pair of socially connected agents are selected at each time step, the pair interaction of the mixed HK model allows several pairs of socially connected individuals chosen at each time step. A \emph{matching} $M$ in graph $G$ is a set of pairwise nonadjacent edges, none of which are loops. The collection of several pairs of socially connected individuals corresponds to a matching in social graph $G.$ In the mixed HK model, each individual can decide its degree of stubbornness and mix its opinion with the average opinion of its social and opinion neighbors for opinion update. $\alpha_i(t)$ is the degree of stubbornness of individual $i$ at time $t.$ An individual is \emph{absolutely stubborn} if its degree of stubbornness is 1. $x_i(t)\in\mathbb{R}^d$ is the opinion of individual $i$ at time $t.$ Denote $\|\alpha\|\leq \|\beta\|$ as $\alpha\ll \beta$. We assume that 
\begin{itemize}
    \item $\alpha_i(t)$, $t\geq 0$ are independent and identically distributed random variables on $[0,1]$ for all $i\geq 1$, \vspace{2pt}
    \item $U_t$, $t\geq 0$ are independent and identically distributed random variables with a support $S$. For the pair interaction, $U_t=\{(i,j)\in (\mathbb{Z}^+)^2: i\neq j\ \hbox{and}\ ( \alpha_i(t)<1\ \hbox{or}\ \alpha_j(t)<1 )\}$ and $S\subset \{\hbox{all matchings in}\ (\mathbb{Z}^+)^2\}$, whereas for the group interaction, $U_t=\{i\geq 1:\alpha_i(t)<1\}$, $S\subset \mathscr{P}(\mathbb{Z}^+)$, the power set of $\mathbb{Z}^+$, and $V(S)=\{i: i\in a\ \hbox{for some}\ a\in S\}\supset \mathbb{Z}^+$, and \vspace{2pt}
    \item $\Tilde{E}(t)=U_t\cap E$ for the pair interaction, whereas $\tilde{E}(t)=E$ for the group interaction.
\end{itemize}
The evolution rule of the mixed HK model is as follows:
\begin{equation}\label{Eq: mHK}
x_i(t+1)=\alpha_i(t)x_i(t)+\frac{1-\alpha_i(t)}{|\mathscr{N}_i(t)|}\sum_{k\in \mathscr{N}_i(t)}x_k(t)   
\end{equation}
 where 
$\mathscr{N}_i(t)=\{j\geq 1:j=i\ \hbox{or}\  (i,j)\in \tilde{E}(t)\cap \mathscr{E}(t)\}$ is the collection of social and opinion neighbors of agent $i$ for opinion update. In particular, \eqref{Eq: mHK} reduces to
\begin{itemize}
    \item the synchronous HK model on infinite graphs if $S=\{\mathbb{Z}^+\}$ and $\alpha_i(t)=0$ for all $i\geq 1$ and $t\geq 0$, \vspace{2pt} 
    \item the asynchronous HK model on infinite graphs if $S=\big\{\{i\}\big\}_{i\geq 1}$ and $\alpha_i(t)=0$ if $i\in U_t$ at all times, and \vspace{2pt}
    \item the Deffuant model on infinite graphs if $S=\big\{\{(i,j)\}\big\}_{(i,j)\in E}$ and $\alpha_i(t)=\alpha_j(t)=1-2\mu$ for all $(i,j)\in U_t$ and $t\geq 0$.
\end{itemize}
Some properties of the mixed HK model on finite graphs do not always hold on infinite graphs. In~\cite{mHK2}, for finite social graph $G$ under group interaction, there is a consensus if a profile is connected infinitely many times and the most stubborn individual of the non-absolutely stubborn over time is non-absolutely stubborn. Since $\mathbb{Z}$ is equivalent to $\mathbb{Z}^+$ on countability, consider social graph $G$ an infinite cycle on $\mathbb{Z}$ with edges $(i,i+1)$, $x_i(0)=i$ and threshold $\epsilon=1$ for all $i\in \mathbb{Z}$. It turns out that $x_i=i$ all the time so there is no consensus. The works by \cite{lanchier2020probability} and \cite{lanchier2022consensus} delved into the probability of consensus in the Deffuant model on finite graphs and the Hegselmann-Krause model on finite graphs, respectively. Additionally, the author in~\cite{MR3069370} established that the critical value of the Deffuant model is one half.

%In~\cite{mHK2}, on the assumptions that all interacting pairs approach each other equally at their rate and that the social graph is monotone under group interaction, asymptotic stability holds for the mixed Hegselmann-Krause model on finite graphs when the most stubborn individual of the non-absolutely stubborn over time is non-absolutely stubborn. On top of that, when a profile is connected infinitely many times, a consensus can be achieved for group interaction, and pair interaction if the union of all elements in the support contains all pairs of all individuals.

\section{Main results}
 Under group interaction, the profile $\Tilde{G}(t)\cap\mathscr{G}(t)$ equals $G\cap \mathscr{G}(t).$ In contrast, under pair interaction, the edge set of a profile is a matching in $G.$ Let $\Delta x_i(t)=x_{i+1}(t)-x_i(t),\ i\geq 1$ be first differences of $x_i(t),\ i\geq 1.$ For a social graph $G$ representing a path under group interaction in one-dimensional space, if a profile equals $G$ at some time, where $G$ corresponds to nonincreasing opinions and nondecreasing first differences, then all opinions are asymptotically stable. 

\begin{theorem}\label{Thm: 1-dim asymptotic stability}
    For group interaction in one-dimensional space, assume that the social graph $G$ is a path with edges $(i,i+1)$ for all $i\geq 1$. If profile $G\cap \mathscr{G}(s)$ equals $G$ at some time~$s$ with $(x_i(s))_{i\geq 1}$ nonincreasing and $(\Delta x_i(s))_{i\geq 1}$ nondecreasing, $x_i$ is asymptotically stable for all $i\geq 1$.
\end{theorem}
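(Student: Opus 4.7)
The plan is to proceed in three stages.

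\emph{Stage 1 (invariance).} Let $d_i(t):=\Delta x_i(t)$. I would first show, by induction on $t\ge s$, that profile $G\cap\mathscr{G}(t)=G$ (equivalently $d_i(t)\in[-\epsilon,0]$ for all $i$) and that $(x_i(t))_{i\ge 1}$ remains nonincreasing. The base case follows from the hypothesis: profile $=G$ gives $|d_i(s)|\le\epsilon$, and nonincreasingness gives $d_i(s)\le 0$. For the inductive step, under profile $=G$ and group interaction we have $\mathscr{N}_i(t)=\{i-1,i,i+1\}$ for $i\ge 2$ and $\mathscr{N}_1(t)=\{1,2\}$, so the update rule specialises to $x_i(t+1)=\alpha_i(t)x_i(t)+\tfrac{1-\alpha_i(t)}{3}(x_{i-1}(t)+x_i(t)+x_{i+1}(t))$ for $i\ge 2$ and an analogous two-term average at $i=1$. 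A direct algebraic computation then yields, for $i\ge 2$,
\begin{equation*}
x_i(t+1)-x_{i+1}(t+1)=-\tfrac{1}{3}\bigl[(1-\alpha_i)d_{i-1}(t)+(1+\alpha_i+\alpha_{i+1})d_i(t)+(1-\alpha_{i+1})d_{i+1}(t)\bigr],
\end{equation*}
a nonnegative linear combination of $-d_{i-1}(t),-d_i(t),-d_{i+1}(t)\in[0,\epsilon]$ whose coefficients sum to $3$. Hence $x_i(t+1)-x_{i+1}(t+1)\in[0,\epsilon]$, which yields both desired properties at time $t+1$; the case $i=1$ is treated by the same method.

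\emph{Stage 2 (convergence of $x_1$).} From the boundary update, $x_1(t+1)-x_1(t)=\tfrac{1-\alpha_1(t)}{2}d_1(t)\le 0$, so $x_1(t)$ is nonincreasing in $t$. Iterating the convex-combination form of the update rule represents each $x_i(t)$ as a convex combination of the initial opinions $(x_j(s))_{j\ge 1}$, each of which is $\ge L_\infty:=\lim_{j\to\infty}x_j(s)\in[-\infty,x_1(s)]$ (the limit exists because $(x_j(s))_j$ is nonincreasing). Therefore $L_\infty\le x_1(t)\le x_1(s)$, and the monotone sequence $x_1(t)$ converges.

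\emph{Stage 3 (convergence of $x_i$ for $i\ge 2$).} The iterated convex-combination representation reads $x_i(t)=\sum_{j\ge 1}P_{ij}(s,t)\,x_j(s)$, where $P_{i\cdot}(s,t)$ is the distribution at time $s$ of the time-reversed random walk started at site $i$ at time $t$: for $i\ge 2$ it steps to $i\pm 1$ each with probability $(1-\alpha_i(\cdot))/3$ and to $i$ with probability $(1+2\alpha_i(\cdot))/3$, and it reflects at $i=1$. Provided the $\alpha_i(t)$ are not almost surely equal to $1$, a comparison with reflected simple random walk shows this chain is null-recurrent almost surely, so $\sum_{j\le M}P_{ij}(s,t)\to 0$ for every fixed $M$ as $t\to\infty$; combined with the existence of $L_\infty$ and a dominated-convergence (or monotone-truncation) argument, this forces $x_i(t)\to L_\infty$. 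The degenerate case $\alpha_i\equiv 1$ is trivial since then $x_i(t)=x_i(s)$. The principal obstacle is precisely this null-recurrence/spreading step: the $\alpha_i(t)$ can be close to or even equal to $1$, producing lazy or absolutely stubborn backward steps, and one must verify carefully, using the i.i.d.\ structure, that almost surely enough active steps occur to drive the walker out of every bounded region.
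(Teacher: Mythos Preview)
Your route is quite different from the paper's, and Stage~3 actually overshoots the theorem.

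The paper gives a short monotonicity-in-$t$ argument with no random-walk representation. After order preservation (your Stage~1), it computes the time increments directly:
\[
x_1(t+1)-x_1(t)=\tfrac{1-\alpha_1(t)}{2}\,d_1(t)\le 0,\qquad
x_i(t+1)-x_i(t)=\tfrac{1-\alpha_i(t)}{3}\bigl[d_i(t)-d_{i-1}(t)\bigr]\ge 0\ \ (i\ge 2),
\]
the second inequality coming \emph{precisely} from the hypothesis that $(\Delta x_j)_j$ is nondecreasing. So $(x_1(t))_t$ is nonincreasing and each $(x_i(t))_t$, $i\ge 2$, is nondecreasing. Boundedness follows from order preservation in space: $x_1(t)\ge x_2(t)\ge x_2(s)$ (using that $x_2$ is nondecreasing in $t$) and $x_i(t)\le x_1(t)\le x_1(s)$ for $i\ge 2$. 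Monotone bounded sequences converge; no probabilistic input is used.

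Two things to flag in your plan. First, you never invoke the hypothesis that $(\Delta x_i(s))_i$ is nondecreasing; in the paper this is the engine of the proof, and the fact that your argument runs without it should make you suspicious of Stage~3. Second, your Stage~3 would give the much stronger conclusion $x_i(t)\to L_\infty=\lim_j x_j(s)$ for every $i\ge 2$ (a \emph{common} limit), whereas the paper, in its worked example $x_j(0)=1/j$, $\epsilon=1/2$, deduces $\lim_t x_i(t)\ge x_i(0)=1/i>0=L_\infty$ from the monotonicity above --- directly at odds with your claimed limit. Relatedly, your Stage~2 lower bound is $L_\infty$, which can be $-\infty$ under the stated hypotheses; the paper's lower bound $x_2(s)$ is always finite. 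So even setting aside the delicate null-recurrence step you already flag, Stage~3 is not the intended line and its conclusion would have to be reconciled with the paper's monotonicity picture before it could stand.
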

Since a profile on infinite graphs still preserves $\delta$-triviality, profile $G\cap \mathscr{G}(t)$ equals $G$ for all $t\geq s$ if opinion graph $\mathscr{G}(s)$ is $\epsilon$-trivial for some $s\geq 0$. Let $N_i^s=\{j\geq 1: j=i\ \hbox{or}\ (i,j)\in E\}$ be the collection of all social neighbors of $i$ including itself. For the social graph $G$ $(r-1)$-regular for some $r\geq 1$ under group interaction, we show circumstances under which any two vertices in the same component of $G$ approach each other after some finite time. $\alpha_i(t)=\alpha_t$ for all $i\geq 1$ and $t\geq 0$ indicates all individuals have the same degree of stubbornness at each time step. $\liminf_{t\to\infty}\alpha_t<1$ reveals that the degree of stubbornness is up to some number less than one infinitely many times. $3|N_i^s\cap N_j^s|> 2r$ for all edges $(i,j)\in E$ represents the number of common vertices of adjacent vertices $i$ and $j$ including themselves is greater than $2r/3.$

\begin{theorem}\label{Thm: consensus of mHK under group interaction}
     For group interaction, assume that social graph $G$ is $(r-1)$-regular for some $r\geq 1$, $\alpha_i(t)=\alpha_t$ for all $i\geq 1$ and $t\geq 0$ with $\liminf_{t\to\infty}\alpha_t<1$ and $3|N_i^s\cap N_j^s|> 2r$ for all edges $(i,j)\in E$. If profile $G\cap\mathscr{G}(s)$ is $G$ for some $s\geq 0$, $\sup_{(i,j)\in E}\|x_i(t)-x_j(t)\|\to 0$ as $t\to\infty.$
\end{theorem}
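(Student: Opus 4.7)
\smallskip

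The plan is to bound $M(t):=\sup_{(i,j)\in E}\|x_i(t)-x_j(t)\|$ by a contraction factor strictly less than $1$ whenever $\alpha_t<1$, and then use $\liminf_{t\to\infty}\alpha_t<1$ to drive it to zero. I would first argue by induction on $t\geq s$ that the profile $G\cap\mathscr{G}(t)$ stays equal to $G$: the base case is the hypothesis, and whenever $G\cap\mathscr{G}(t)=G$ we have $\mathscr{N}_i(t)=N_i^s$ for every $i$, so \eqref{Eq: mHK} becomes the $r$-point average $x_i(t+1)=\alpha_t x_i(t)+\tfrac{1-\alpha_t}{r}\sum_{k\in N_i^s}x_k(t)$; the per-edge bound derived below then gives $M(t+1)\leq M(t)\leq\epsilon$, so every edge of $G$ still lies in $\mathscr{G}(t+1)$ and the induction closes.

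For a fixed edge $(i,j)\in E$, set $C=N_i^s\cap N_j^s$ and $n=|C|$, and note that $i,j\in C$ and $|N_i^s\setminus C|=|N_j^s\setminus C|=r-n$. Subtracting the updates for $i$ and $j$ cancels the sum over $C$; rewriting each residual sum as $(r-n)x_i+\sum_{k\in N_i^s\setminus C}(x_k-x_i)$ (and analogously for $j$) and bounding every resulting edge-difference by $M(t)$ yields
$$\|x_i(t+1)-x_j(t+1)\|\leq\left(\alpha_t+\tfrac{3(1-\alpha_t)(r-n)}{r}\right)M(t).$$
By integrality, $3n>2r$ forces $3(r-n)\leq r-1$, so the coefficient is at most $\alpha_t+(1-\alpha_t)(r-1)/r=1-(1-\alpha_t)/r$, uniformly in the edge. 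Taking the supremum gives $M(t+1)\leq\bigl(1-(1-\alpha_t)/r\bigr)M(t)$.

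Iterating from $s$ produces $M(t)\leq M(s)\prod_{u=s}^{t-1}\bigl(1-(1-\alpha_u)/r\bigr)$. Choose $c<1$ with $\alpha_u\leq c$ for infinitely many $u$, which is possible since $\liminf_{t\to\infty}\alpha_t<1$; along that subsequence each factor is bounded by $1-(1-c)/r<1$ while every other factor is at most $1$, so the infinite product vanishes and $M(t)\to 0$. The substantive step is the per-edge decomposition: the coefficient $3(1-\alpha_t)(r-n)/r$ is the sum of one contribution from the diagonal term $(r-n)(x_i-x_j)$ and two contributions from the leftover edge-sums, and the hypothesis $3|N_i^s\cap N_j^s|>2r$ is precisely the sharp threshold that forces it to be strictly less than $1-\alpha_t$ times something. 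Everything else is a standard contraction-and-product argument.
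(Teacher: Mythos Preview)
Your proposal is correct and follows essentially the same approach as the paper: establish profile preservation together with the contraction bound $M(t+1)\le\bigl(1-(1-\alpha_t)/r\bigr)M(t)$ via the per-edge estimate (the paper packages this as Lemma~\ref{regular preserving}), then iterate along a subsequence on which $1-\alpha_t$ is bounded below. Your decomposition centers the residual sums at $x_i$ and $x_j$ while the paper instead pairs off $p_k\in N_i^s\setminus C$ with $q_k\in N_j^s\setminus C$ and bounds $\|x_{p_k}-x_{q_k}\|$ via the $3$-edge path $p_k\text{--}i\text{--}j\text{--}q_k$, but both routes produce the identical coefficient $\alpha_t+3(1-\alpha_t)(r-n)/r$.
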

This implies $\|x_i(t)-x_j(t)\|\to 0$ as $t\to\infty$ for all $i,j\geq 1$ if $G$ is connected. For social graph $G$ connected under pair interaction, we show conditions under which any two vertices are close to each other after some finite time. $\bigcup_{a\in S}a\supset E$ indicates all pairs of socially connected individuals are selected with positive probability. $0\leq \sup_{t\geq 0} \sup \{\alpha_i (t):i\geq 1\ \hbox{and}\ \alpha_i(t)<1\}< 1$ represents that the most stubborn individual of the non-absolutely stubborn over time is non-absolutely stubborn.

\begin{theorem}\label{Thm: consensus of mHK under pair interaction}
    For pair interaction, assume that social graph $G$ is connected, all interacting pairs approach each other equally at their rate, $\bigcup_{a\in S}a\supset E$ and that $$0\leq \sup_{t\geq 0} \sup \{\alpha_i (t):i\geq 1\ \hbox{and}\ \alpha_i(t)<1\}< 1.$$ Then, $\|x_i(t)-x_j(t)\|\to 0$ as $t\to\infty$ for all $i,j\geq 1.$
\end{theorem}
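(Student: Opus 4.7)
The plan is to reduce the claim to adjacent pairs via connectivity, then drive each edge distance to zero using Borel--Cantelli on the selection of that edge together with the contraction of pair updates. Since $G$ is connected and the vertex set is countable, for any $i,j\geq 1$ there is a finite path $i=w_0,w_1,\dots,w_m=j$ in $G$, so the triangle inequality
\begin{equation*}
\|x_i(t)-x_j(t)\|\leq \sum_{l=0}^{m-1}\|x_{w_l}(t)-x_{w_{l+1}}(t)\|
\end{equation*}
reduces the claim to showing $\|x_u(t)-x_v(t)\|\to 0$ for every edge $(u,v)\in E$.

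Fix such an edge $(u,v)$. By the hypothesis $\bigcup_{a\in S}a\supset E$, some $a\in S$ in the support of $U_t$ contains $(u,v)$, so $P((u,v)\in U_t)>0$; since $(U_t)_{t\geq 0}$ is i.i.d., the second Borel--Cantelli lemma yields $(u,v)\in U_t$ for infinitely many $t$ almost surely. Let $\bar\alpha:=\sup_{t\geq 0}\sup\{\alpha_i(t):i\geq 1,\ \alpha_i(t)<1\}<1$. Whenever $(u,v)\in U_t$ and the two endpoints are opinion neighbors, the equal-rate hypothesis gives $\alpha_u(t)=\alpha_v(t)=:\alpha_t$, and a direct computation from \eqref{Eq: mHK} produces
\begin{equation*}
x_u(t+1)-x_v(t+1)=\alpha_t\bigl(x_u(t)-x_v(t)\bigr),
\end{equation*}
so the distance contracts by a factor at most $\bar\alpha<1$.

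The main obstacle is controlling $\|x_u(t)-x_v(t)\|$ at the time steps when $(u,v)$ is not part of the selected matching: at such steps $u$ or $v$ may interact with a third neighbor and the distance may transiently grow. Here the key structural input is that every pair update is a convex combination of two opinions, so the dynamics is componentwise non-expansive; in particular, for any unit $e\in\mathbb{R}^d$, the scalar supremum $\sup_i e\cdot x_i(t)$ is non-increasing and the infimum non-decreasing, so the directional diameter of opinions is bounded throughout by its initial value. I plan to combine this global non-expansiveness with the infinitely many $\bar\alpha$-contractions occurring at selections of $(u,v)$, via an energy / Lyapunov-type argument exploiting the conservation of $x_u+x_v$ under each pair update (for instance on a finite exhausting subgraph containing $(u,v)$, followed by a passage to the limit), to force $\|x_u(t)-x_v(t)\|\to 0$. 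The delicate point, for which the strict bound $\bar\alpha<1$ is essential, is to ensure that $u$ and $v$ remain within $\epsilon$ often enough that each selection actually triggers the $\alpha_t$-contraction and that the accumulated contractions dominate the interim expansions caused by interactions with third neighbors.
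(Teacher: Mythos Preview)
Your reduction to edges via connectivity and the use of Borel--Cantelli to get infinitely many selections of a fixed edge $(u,v)$ match the paper exactly, and your contraction formula $x_u(t+1)-x_v(t+1)=\alpha_t\bigl(x_u(t)-x_v(t)\bigr)$ is correct. The gap is the paragraph that follows: you correctly identify that between selections of $(u,v)$ either endpoint may move toward a third neighbor, and you propose to dominate these interim expansions by ``global non-expansiveness'' of directional diameters together with an unspecified energy argument on finite exhausting subgraphs. This plan does not close. On an infinite graph the directional diameter $\sup_i e\cdot x_i(0)-\inf_i e\cdot x_i(0)$ need not be finite, so that monotonicity gives no usable bound; and even when it is finite, nothing in your outline prevents $\|x_u-x_v\|$ from being restored to order $\epsilon$ between consecutive selection times, so the product of $\bar\alpha$-contractions tells you nothing about the limit.

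The paper avoids tracking the edge distance altogether. Its key input is Lemma~\ref{Differences} (built on Lemmas~\ref{supermartingale for a finite number of agents}--\ref{supermartingale for countable individuals}): from the global $\ell^2$ energy one gets a nonnegative supermartingale $Z_t$ with $Z_t-Z_{t+1}\geq 4\sum_{i}\|x_i(t)-x_i(t+1)\|^2$, hence $\|x_i(t+1)-x_i(t)\|\to 0$ for every vertex $i$. Now argue by contradiction: if $\|x_u(t_k)-x_v(t_k)\|\geq\delta$ along a subsequence, Borel--Cantelli produces a further subsequence $(s_k)$ at which $(u,v)$ is selected with rate $\mu(s_k)\geq\gamma:=(1-\bar\alpha)/2>0$, and then the single-vertex increment satisfies $\|x_u(s_k+1)-x_u(s_k)\|=\mu(s_k)\|x_u(s_k)-x_v(s_k)\|\geq\gamma\delta$, contradicting the increment convergence. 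So the missing idea is: look at $\|x_i(t+1)-x_i(t)\|$, not at $\|x_u(t)-x_v(t)\|$, and force it to zero via the summable-decrement supermartingale. Your vague ``energy/Lyapunov'' remark is pointed in the right direction, but the specific functional and, crucially, the conclusion that \emph{increments} vanish are what make the argument go through.
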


\section{Properties of the model}
For the mixed HK model on finite graphs, a profile preserves $\delta$-triviality. It follows from Lemma~\ref{matching} that a profile still preserves $\delta$-triviality.
\begin{lemma}[\cite{mHK}]\label{matching}
Given $\lambda_1,...,\lambda_n$ in $\mathbf{R}$ with $\sum_{i=1}^n\lambda_i=0$ and $x_1,...,x_n$ in $\mathbf{R^d}$. Then for $\lambda_1x_1+\lambda_2x_2+...+\lambda_nx_n,$ the terms with positive coefficients can be matched with the terms with negative coefficients in the sense that there are nonnegative values $c_i$ such that
\begin{align*}
     \sum_{i=1}^n\lambda_ix_i=\sum_{i,c_i\geq0,j,k\in[n]} c_{i}(x_{j}-x_{k})\text{ and }\sum_i c_i=\sum_{j,\lambda_j\geq0}\lambda_j.
\end{align*}
\end{lemma}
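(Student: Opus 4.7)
The plan is a short greedy transportation argument. Partition the indices into $P = \{j : \lambda_j > 0\}$ and $N = \{k : \lambda_k < 0\}$; indices with $\lambda_i = 0$ contribute nothing to either side of the desired identity and can be ignored. The hypothesis $\sum_i \lambda_i = 0$ yields
$$\sum_{j \in P} \lambda_j \;=\; \sum_{k \in N}(-\lambda_k) \;=:\; L,$$
which is precisely the target value for $\sum_i c_i$.

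I would then reformulate the claim as producing a nonnegative array $(c_{jk})_{j \in P,\,k \in N}$ with marginals $\sum_{k \in N} c_{jk} = \lambda_j$ for every $j \in P$ and $\sum_{j \in P} c_{jk} = -\lambda_k$ for every $k \in N$. Granted such an array, the telescoping computation
$$\sum_{j \in P}\sum_{k \in N} c_{jk}(x_j - x_k) \;=\; \sum_{j \in P}\lambda_j x_j + \sum_{k \in N} \lambda_k x_k \;=\; \sum_{i=1}^n \lambda_i x_i$$
holds, and $\sum_{j,k} c_{jk} = \sum_{j \in P}\lambda_j = L$. Relabelling the nonzero $c_{jk}$ as $c_1,c_2,\ldots$ together with their associated pairs $(j,k)$ gives exactly the form stated in the lemma.

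To construct $(c_{jk})$ I would use a finite northwest-corner iteration. Pick any $j \in P$ and $k \in N$ whose residuals $\lambda_j$ and $-\lambda_k$ are still positive, set $c_{jk} = \min(\lambda_j, -\lambda_k)$, and update $\lambda_j \leftarrow \lambda_j - c_{jk}$, $\lambda_k \leftarrow \lambda_k + c_{jk}$. At least one of the two residuals becomes zero after this step, and that index drops out. The loop invariant that the total remaining positive mass equals the total remaining negative mass is preserved, so when the last positive residual is exhausted the last negative residual is simultaneously exhausted; the procedure therefore terminates in at most $|P|+|N|-1$ steps with the prescribed marginals.

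The main (and essentially only) obstacle is the index book-keeping: verifying that the invariant above forces both marginals to be met at termination and that $(c_{jk})$ really produces the claimed decomposition. Once this is handled, everything else is linear algebra. An equivalent, perhaps more streamlined, route is induction on $|P|+|N|$: a single northwest-corner step reduces the problem to a strictly smaller instance of the same type, and the base case $|P|=|N|=1$ gives $c_{jk}=\lambda_j=-\lambda_k$ directly.
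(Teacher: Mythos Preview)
Your argument is correct: the northwest-corner (greedy transportation) construction produces a nonnegative array $(c_{jk})$ with the right marginals, and the telescoping identity then yields both conclusions of the lemma. The termination and invariant checks are straightforward, and your alternative induction on $|P|+|N|$ is an equally clean packaging.

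As for comparison, the paper does not supply its own proof of this lemma; it is quoted from \cite{mHK} and used as a black box. So there is nothing in the present paper to compare your argument against. Your transportation proof is the standard one and would be a perfectly acceptable justification here.
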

 For social graph $G$ a path under group interaction, a profile remains to be a path if it is $G$ at some time.

\begin{lemma}[path preserving]\label{path preserving}
    Assume that social graph $G$ is a path under group interaction. If profile $G\cap\mathscr{G}(t)$ equals $G$, so does profile $G\cap\mathscr{G}(t+1)$ for all $t\geq 0$.
\end{lemma}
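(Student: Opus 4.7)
The plan is to show that $\norm{\Delta x_i(t+1)} \leq \epsilon$ for every $i \geq 1$, which is exactly the condition for profile $G \cap \mathscr{G}(t+1)$ to equal $G$. From the hypothesis that profile $G \cap \mathscr{G}(t) = G$, I already know that $\norm{\Delta x_i(t)} \leq \epsilon$ for all $i \geq 1$, so the goal is to propagate this bound forward one step using \eqref{Eq: mHK}.

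First I would record that profile $G \cap \mathscr{G}(t) = G$ forces $\mathscr{N}_i(t) = \{i-1, i, i+1\}$ for $i \geq 2$ and $\mathscr{N}_1(t) = \{1, 2\}$, so that \eqref{Eq: mHK} reads, with $\beta_i := (1-\alpha_i(t))/3$ and $\gamma := (1-\alpha_1(t))/2$,
\begin{align*}
x_i(t+1) &= \beta_i x_{i-1}(t) + (1 - 2\beta_i) x_i(t) + \beta_i x_{i+1}(t), \quad i \geq 2,\\
x_1(t+1) &= (1 - \gamma) x_1(t) + \gamma x_2(t).
\end{align*}

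Next I would subtract these to express $\Delta x_i(t+1)$ as a zero-sum linear combination of $x_{i-1}(t),\dots,x_{i+2}(t)$ and then, in the spirit of Lemma~\ref{matching}, regroup it as a nonnegative combination of the consecutive first differences. A short matching of coefficients gives, for $i \geq 2$,
\begin{equation*}
\Delta x_i(t+1) = \beta_i \, \Delta x_{i-1}(t) + (1 - \beta_i - \beta_{i+1}) \, \Delta x_i(t) + \beta_{i+1} \, \Delta x_{i+1}(t),
\end{equation*}
and the analogous boundary identity
\begin{equation*}
\Delta x_1(t+1) = (1 - \gamma - \beta_2) \, \Delta x_1(t) + \beta_2 \, \Delta x_2(t).
\end{equation*}

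Finally, since $\beta_k \in [0, 1/3]$ and $\gamma \in [0, 1/2]$, every coefficient above is nonnegative and the coefficients sum to $1$ (interior) or $1 - \gamma \leq 1$ (boundary). The triangle inequality then delivers $\norm{\Delta x_i(t+1)} \leq \epsilon$ for every $i \geq 1$, and so $(i,i+1) \in \mathscr{E}(t+1)$ for every $i$, completing the proof. The one delicate point is the boundary vertex $1$, where the neighborhood has two elements instead of three and the normalizing factor jumps from $1/3$ to $1/2$; the inequality $\gamma + \beta_2 \leq 1/2 + 1/3 < 1$ is precisely what keeps the boundary decomposition coefficients in $[0,1]$.
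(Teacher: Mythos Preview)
Your proof is correct, and cleaner than the paper's. Both arguments write $x_{i+1}(t+1)-x_i(t+1)$ as a zero-sum linear combination of the $x_k(t)$ and then bound via the triangle inequality, but the decompositions differ. The paper applies the matching lemma directly: it pairs positive and negative coefficients, which produces non-adjacent differences such as $x_1-x_4$ (then bounded by $3\sup_{(i,j)\in E}\|x_i-x_j\|$) and forces a case split on the sign of $\alpha_2-\alpha_3$ in the interior and on $3\alpha_1+4\alpha_2-1$ at the boundary. Your route instead recognizes that each $x_i(t+1)$ is a local convex average and shows $\Delta x_i(t+1)$ is itself a (sub-)convex combination of the \emph{adjacent} first differences $\Delta x_{i-1}(t),\Delta x_i(t),\Delta x_{i+1}(t)$, so the bound $\epsilon$ is immediate with no case analysis. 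The paper's matching viewpoint is more robust for general graphs (as in Lemma~\ref{regular preserving}), but on a path your tridiagonal convex-combination identity is the sharper tool.
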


\begin{proof}
    Without loss of generality, let $G$ be the path with edges $(i,i+1)$ for all $i\geq 1.$ Profile $G\cap\mathscr{G}(t)=G$ implies edge $(i,i+1)\in \mathscr{E}(t)$ for all $i\geq 1.$ We claim that edge $(i,i+1)\in \mathscr{E}(t+1)$ for all $i\geq 1$. Let $\alpha_i=\alpha_i(t)$, $x_i=x_i(t)$ and $x_i^\star=x_i(t+1)$ for all $i\geq 1$ and $t\geq 0.$  Since edges $(i,i+1),\ i\geq 2$ are equivalent, we see if edge $(2,3)\in \mathscr{E}(t+1).$ By the triangle inequality, we obtain
    \begin{align}
         x_1^\star-x_2^\star& =\alpha_1x_1+(1-\alpha_1)\frac{x_1+x_2}{2}-\big[\alpha_2 x_2+(1-\alpha_2)\frac{x_1+x_2+x_3}{3}\big]\notag\\
        &=\frac{1+3\alpha_1+2\alpha_2}{6}x_1+\frac{1-3\alpha_1-4\alpha_2}{6}x_2-\frac{1-\alpha_2}{3}x_3 \label{Edge12}\\
         x_2^\star-x_{3}^\star&=\alpha_2 x_2+(1-\alpha_2)\frac{x_{1}+x_2+x_{3}}{3}-\big[\alpha_3 x_3+(1-\alpha_3)\frac{x_2+x_{3}+x_{4}}{3}\big]\notag\\
        &=\frac{1-\alpha_2}{3}x_1+\frac{2\alpha_2+\alpha_3}{3}x_2-\frac{\alpha_2+2\alpha_3}{3}x_3-\frac{1-\alpha_3}{3}x_4 \label{Edge23}
    \end{align}
    
    \begin{align}
      \eqref{Edge12}=  &\bigg\{\begin{array}{lc}
       \displaystyle \frac{1+3\alpha_1+2\alpha_2}{6}(x_1-x_3)+\frac{1-3\alpha_1-4\alpha_2}{6}(x_2-x_3) & \hbox{if}\quad 3\alpha_1+4\alpha_2<1\vspace{2pt}\notag\\
       \displaystyle -\frac{1-3\alpha_1-4\alpha_2}{6}(x_1-x_2)+\frac{1-\alpha_2}{3}(x_1-x_3) &\hbox{otherwise,}
       \end{array}\notag\\
       \ll&\bigg\{\begin{array}{lc}
       \displaystyle \big(\frac{1+3\alpha_1+2\alpha_2}{6}\cdot 2+\frac{1-3\alpha_1-4\alpha_2}{6}\big)\sup_{(i,j)\in E}\|x_i-x_j\| \vspace{2pt}\notag\\
       \displaystyle \big(-\frac{1-3\alpha_1-4\alpha_2}{6}+\frac{1-\alpha_2}{3}\cdot 2\big) \sup_{(i,j)\in E}\|x_i-x_j\|
       \end{array}\notag\\
       =&\frac{1+\alpha_1}{2}\sup_{(i,j)\in E}\|x_i-x_j\|\ll\epsilon\notag\\
       \eqref{Edge23}=&\bigg\{\begin{array}{lc}
       \displaystyle \frac{1-\alpha_2}{3}(x_1-x_4)+\frac{\alpha_2-\alpha_3}{3}(x_2-x_4)+\frac{\alpha_2+2\alpha_3}{3}(x_2-x_3) & \hbox{if}\quad \alpha_2\geq\alpha_3\vspace{2pt}\\
       \displaystyle\frac{1-\alpha_3}{3}(x_1-x_4)+\frac{\alpha_3-\alpha_2}{3}(x_1-x_3)+\frac{\alpha_3+2\alpha_2}{3}(x_2-x_3) &\hbox{otherwise,}
       \end{array}\notag\\
       \ll&\bigg\{\begin{array}{lc}
       \displaystyle \big(\frac{1-\alpha_2}{3}\cdot 3+\frac{\alpha_2-\alpha_3}{3}\cdot 2+\frac{\alpha_2+2\alpha_3}{3}\big) \sup_{(i,j)\in E}\|x_i-x_j\| \vspace{2pt}\notag\\
       \displaystyle \big(\frac{1-\alpha_3}{3}\cdot 3+\frac{\alpha_3-\alpha_2}{3}\cdot 2+\frac{\alpha_3+2\alpha_2}{3}\big) \sup_{(i,j)\in E}\|x_i-x_j\|
       \end{array}\notag\\
      = & \sup_{(i,j)\in E}\|x_i-x_j\|\ll \epsilon.\notag
    \end{align}
     So edge $(i,i+1)\in \mathscr{E}(t+1)$ for all $i\geq 1$, which implies profile $G\cap\mathscr{G}(t+1)$ equals $G.$
\end{proof}
In particular for group interaction in one-dimensional space, the opinion order preserves if social graph $G$ is a path corresponding to monotone opinions and the profile equals $G$ at some time.

\begin{lemma}[order preserving]\label{order preserving}
    For group interaction in one-dimensional space, assume that the social graph $G$ is the path with edges $(i,i+1)$ for all $i\geq 1.$ If profile $G\cap\mathscr{G}(t)$ equals $G$ with $(x_i(t))_{i\geq 1}$ nondecreasing, so does profile $G\cap\mathscr{G}(t+1)$ for all $t\geq 0$. 
\end{lemma}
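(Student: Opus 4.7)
The plan is to read off the two halves of the conclusion separately. Lemma~\ref{path preserving} applied to the hypothesis already delivers the first half---that profile $G\cap\mathscr{G}(t+1)$ equals $G$---so the only new content to establish is that the updated opinions $(x_i(t+1))_{i\geq 1}$ are still nondecreasing, i.e., $x_{i+1}(t+1)-x_i(t+1)\geq 0$ for every $i\geq 1$.

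Using the abbreviations $x_i=x_i(t)$, $\alpha_i=\alpha_i(t)$ and $x_i^\star=x_i(t+1)$ from the preceding proof, I would verify $x_{i+1}^\star-x_i^\star\geq 0$ by direct computation and then apply Lemma~\ref{matching}. The boundary case $i=1$ is just the negative of \eqref{Edge12}:
\begin{equation*}
x_2^\star-x_1^\star=-\frac{1+3\alpha_1+2\alpha_2}{6}\,x_1-\frac{1-3\alpha_1-4\alpha_2}{6}\,x_2+\frac{1-\alpha_2}{3}\,x_3,
\end{equation*}
whose three coefficients sum to zero. A short split on the sign of $1-3\alpha_1-4\alpha_2$ shows that the negative coefficients always lie on the lower-indexed variables (either $\{x_1\}$ or $\{x_1,x_2\}$) and the positive coefficients on the higher-indexed ones; Lemma~\ref{matching} then rewrites the right-hand side as a nonnegative combination of differences $x_j-x_k$ with $j>k$, each nonnegative by the monotonicity hypothesis on $(x_i)_{i\geq 1}$.

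For the generic case $i\geq 2$, the index-shifted negative of \eqref{Edge23} gives
\begin{equation*}
x_{i+1}^\star-x_i^\star=-\frac{1-\alpha_i}{3}\,x_{i-1}-\frac{2\alpha_i+\alpha_{i+1}}{3}\,x_i+\frac{\alpha_i+2\alpha_{i+1}}{3}\,x_{i+1}+\frac{1-\alpha_{i+1}}{3}\,x_{i+2},
\end{equation*}
whose coefficients again sum to zero, with the two negative coefficients sitting on $x_{i-1},x_i$ and the two positive coefficients on $x_{i+1},x_{i+2}$. A second application of Lemma~\ref{matching} expresses this as a nonnegative combination of the four differences $x_{i+1}-x_{i-1}$, $x_{i+1}-x_i$, $x_{i+2}-x_{i-1}$, $x_{i+2}-x_i$, all nonnegative by hypothesis.

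There is no deep obstacle; the only care needed is the sign split in the $i=1$ case, since the middle coefficient $-\frac{1-3\alpha_1-4\alpha_2}{6}$ is not sign-definite. For $i\geq 2$ the sign pattern is already in the form required by Lemma~\ref{matching}, and monotonicity is preserved automatically.
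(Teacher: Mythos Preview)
Your argument is correct and matches the paper's. The paper's own proof is a one-liner that invokes Lemma~\ref{path preserving} for the first half and then says the inequalities $x_1^\star-x_2^\star\le 0$ and $x_i^\star-x_{i+1}^\star\le 0$ ``follow from~\eqref{Edge12} and~\eqref{Edge23}''; implicitly this means reading the sign directly off the case-split decompositions of \eqref{Edge12} and \eqref{Edge23} already displayed in the proof of Lemma~\ref{path preserving}, which is exactly your Lemma~\ref{matching} argument with the specific pairings written out.
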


\begin{proof}
    Let $x_i=x_i(t)$ and $x_i^\star=x_i(t+1)$ for all $i\geq 1$ and $t\geq 0.$ Via Lemma~\ref{path preserving}, profile $G\cap\mathscr{G}(t+1)$ equals $G.$ It follows from~\eqref{Edge12} and~\eqref{Edge23}  that $x_1^\star-x_2^\star\leq 0$ and $x_{i}^\star-x_{i+1}^\star\leq 0$ for all $i>1$ so $(x_i(t+1))_{i\geq 1}$ is nondecreasing. 
\end{proof}
For a social graph $G$ $(r-1)$-regular for some $r\geq 1$ under group interaction, if all individuals share the same degree of stubbornness at all times and the number of common social neighbors of each socially connected pair including themselves is at least $2r/3$, a profile preserves regularity if it is $G$ at some time.

\begin{lemma}\label{regular preserving}
    For group interaction, assume that social graph $G$ is $(r-1)$-regular for some $r\geq 1$, $\alpha_i(t)=\alpha_t$ for all $i\geq 1$ and $t\geq 0$, and $|N_i^s\cap N_j^s|\geq 2r/3$ for all edges $(i,j)\in E$. If profile $G\cap\mathscr{G}(t)$ is $G$, so is profile $G\cap\mathscr{G}(t+1)$ for all $t\geq 0$. In particular,
    \begin{align*}
         \sup_{(i,j)\in E}\|x_i(t+1)-x_j(t+1)\|&\\
        &\hspace{-3cm}\leq \frac{\sup_{(i,j)\in E}\|x_i(t)-x_j(t)\|}{r}\big[r-(3\inf_{(i,j)\in E}|N_i^s\cap N_j^s|-2r)(1-\alpha_t)\big].
    \end{align*}
\end{lemma}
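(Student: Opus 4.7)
The plan is to bound $\|x_i(t+1)-x_j(t+1)\|$ for an arbitrary fixed edge $(i,j)\in E$ and then take the supremum. Since profile $G\cap\mathscr{G}(t)=G$ and $G$ is $(r-1)$-regular, we have $\mathscr{N}_k(t)=N_k^s$ with $|N_k^s|=r$ for every vertex $k$, so the update rule~\eqref{Eq: mHK} simplifies to $x_k(t+1)=\alpha_t x_k(t)+\frac{1-\alpha_t}{r}\sum_{\ell\in N_k^s}x_\ell(t)$. I will write $x_k$ for $x_k(t)$ and $M=\sup_{(i,j)\in E}\|x_i-x_j\|$ throughout.

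Next I would decompose the two neighborhoods into $A=N_i^s\cap N_j^s$, $B=N_i^s\setminus N_j^s$ and $C=N_j^s\setminus N_i^s$. Regularity forces $|B|=|C|=r-|A|$, and since $(i,j)\in E$ gives $i\in N_j^s$ and $j\in N_i^s$, both $i$ and $j$ lie in $A$. In particular every $b\in B$ is a neighbor of $i$ in $E$ with $b\neq j$, and every $c\in C$ is a neighbor of $j$ in $E$ with $c\neq i$. The $A$ terms cancel in the difference of the two averages, yielding
\begin{equation*}
x_i(t+1)-x_j(t+1)=\alpha_t(x_i-x_j)+\frac{1-\alpha_t}{r}\Bigl(\sum_{b\in B}x_b-\sum_{c\in C}x_c\Bigr).
\end{equation*}
Fixing any bijection $\sigma\colon B\to C$ (available because $|B|=|C|$), I rewrite the parenthesized sum as $\sum_{b\in B}\bigl[(x_b-x_i)+(x_i-x_j)+(x_j-x_{\sigma(b)})\bigr]$. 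Each of the three differences in the bracket is along an edge of $E$, hence has norm at most $M$. The triangle inequality therefore gives $\bigl\|\sum_B x_b-\sum_C x_c\bigr\|\leq 3(r-|A|)M$.

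Assembling the pieces and using $|A|\geq a:=\inf_{(i,j)\in E}|N_i^s\cap N_j^s|\geq 2r/3$ (so that reducing $|A|$ to $a$ only increases the upper bound), I obtain
\begin{equation*}
\|x_i(t+1)-x_j(t+1)\|\leq \alpha_t M+\frac{3(r-a)(1-\alpha_t)}{r}M=\frac{M}{r}\bigl[r-(3a-2r)(1-\alpha_t)\bigr],
\end{equation*}
which is the claimed inequality after taking the supremum over $(i,j)\in E$. Because $3a-2r\geq 0$, the bracket is at most $r$, so $\|x_i(t+1)-x_j(t+1)\|\leq M\leq \epsilon$; thus every edge $(i,j)\in E$ persists in $\mathscr{E}(t+1)$, and profile $G\cap\mathscr{G}(t+1)=G$. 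The only subtle point is recognizing the right decomposition: that regularity forces $|B|=|C|$, that $i,j$ automatically lie in $A$ so the pairing never produces a trivial or invalid difference, and that the path $b\to i\to j\to \sigma(b)$ of length three explains the factor $3$ appearing in the constant $3a-2r$ of the lemma.
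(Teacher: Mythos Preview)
Your proof is correct and follows essentially the same route as the paper's: cancel the common neighborhood $A$, pair the remaining $r-|A|$ vertices of $N_i^s\setminus N_j^s$ with those of $N_j^s\setminus N_i^s$, and bound each resulting difference by $3M$ via the three-edge path through $i$ and $j$. The paper writes the pairing as $\sum_k(x_{p_k}-x_{q_k})$ and jumps directly to the factor $3(r-|N_i^s\cap N_j^s|)$ without spelling out the telescoping $b\to i\to j\to\sigma(b)$; your version makes that step explicit, which is the only real difference.
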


\begin{proof}
    The profile is $G$ at all times when $r=1.$ For $r>1$, let $x_i^\star=x_i(t+1)$ and $x_i=x_i(t)$ for all $i\geq 1$ and $t\geq 0$. For edge $(i,j)\in E\cap \mathscr{E}(t)=E$ and $t\geq 0$, by the triangle inequality, we get
    \begin{align*}
        x_i^\star-x_j^\star=&\alpha_t x_i-\alpha_t x_j+\frac{1-\alpha_t}{r}\sum_{k\in N_i^s-N_j^s}x_k-\frac{1-\alpha_t}{r}\sum_{k\in N_j^s-N_i^s} x_k\\
        =&\frac{1-\alpha_t}{r}\sum_{1\leq k\leq r-|N_i^s \cap N_j^s|;\atop p_k\in N_i^s-N_j^s;\ q_k\in N_j^s-N_i^s }(x_{p_k}-x_{q_k}) +\alpha_t(x_i-x_j)\\
        \ll & \frac{\sup_{(i,j)\in E}\|x_i-x_j\|}{r}\big[ r\alpha_t+3(1-\alpha_t)(r-|N_i^s \cap N_j^s|)\big ]\\
        \ll & \frac{\sup_{(i,j)\in E}\|x_i-x_j\|}{r}\big[r-(3\inf_{(i,j)\in E}|N_i^s\cap N_j^s|-2r)(1-\alpha_t)\big]\ll \epsilon.
    \end{align*}
    Hence, edge $(i,j)\in E\cap \mathscr{E}(t+1)$ and profile $G\cap\mathscr{G}(t+1)$ is $G$.
\end{proof}
Lemma~\ref{supermartingale for a finite number of agents} describes nonnegative supermartingales for group interaction and pair interaction if all interacting pairs approach each other equally at their rate on finite graphs. This helps us derive nonnegative supermartingales for group interaction and pair interaction if all interacting pairs approach each other equally at their rate on infinite graphs.

\begin{lemma}[\cite{mHK}]\label{supermartingale for a finite number of agents}
    Let \begin{equation*}
        \begin{array}{l}
        \displaystyle    Z_p(t)=\sum_{i,j\in [n]}\|x_i(t)-x_j(t)\|^2,  \\
        \displaystyle    Z_g(t)=\sum_{i,j\in [n]}(\|x_i(t)-x_j(t)\|^2\wedge\epsilon^2)\vee \epsilon^2\mathbbm{1}\{(i,j)\notin E\}. 
        \end{array}
    \end{equation*} In a system of $n$ individuals, we have $$Z_p(t)-Z_p(t+1)\geq 4\sum_{i\in [n]}\|x_i(t)-x_i(t+1)\|^2$$ for pair interaction if all interacting pairs approach each other equally at their rate, and $$Z_g(t)-Z_g(t+1)\geq 4\sum_{i\in [n]}\|x_i(t)-x_i(t+1)\|^2$$ for group interaction.
    
\end{lemma}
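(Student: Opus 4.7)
The plan is to exploit the symmetry of each matched pair. Since both endpoints of any interacting edge $(i,j)$ approach each other at a common rate $\mu_{ij} \in (0, 1/2]$, the matched pair satisfies $x_i(t+1) + x_j(t+1) = x_i(t) + x_j(t)$ and unmatched agents are stationary, so the centroid $\sum_i x_i$ is conserved. Combined with the identity
\[
\sum_{i,j \in [n]}\|y_i - y_j\|^2 \;=\; 2n\sum_i\|y_i\|^2 - 2\Bigl\|\sum_i y_i\Bigr\|^2,
\]
this reduces the decrement to $Z_p(t) - Z_p(t+1) = 2n\sum_i(\|y_i\|^2 - \|y_i^*\|^2)$, where $y_i = x_i(t)$ and $y_i^* = x_i(t+1)$. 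A direct per-pair computation yields $\|y_i\|^2 + \|y_j\|^2 - \|y_i^*\|^2 - \|y_j^*\|^2 = 2\mu_{ij}(1-\mu_{ij})\|y_i - y_j\|^2$ and $\|y_i - y_i^*\|^2 + \|y_j - y_j^*\|^2 = 2\mu_{ij}^2 \|y_i - y_j\|^2$, so the claim reduces to the elementary bound $n(1-\mu_{ij}) \geq 2\mu_{ij}$, immediate for $n \geq 2$ and $\mu_{ij} \leq 1/2$.

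\textbf{Group interaction.} Pairs $(i,j) \notin E$ contribute the constant $\epsilon^2$ at both times and cancel, so only $(i,j) \in E$ matters. On an edge $(i,j) \in E \setminus \mathscr{E}(t)$ outside the profile, the contribution $\epsilon^2 - \min(\|y_i^* - y_j^*\|^2, \epsilon^2)$ is already non-negative. On an edge $(i,j) \in E \cap \mathscr{E}(t)$ inside the profile, I bound the decrement from below by $\|y_i - y_j\|^2 - \|y_i^* - y_j^*\|^2$ and, writing $d_i = y_i - y_i^*$, expand it as $2\langle y_i - y_j, d_i - d_j\rangle - \|d_i - d_j\|^2$. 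The update identity $\sum_{k \in \mathscr{N}_i}(y_i - y_k) = \frac{|\mathscr{N}_i|}{1-\alpha_i}d_i$ regroups the cross terms vertex by vertex into $\sum_i \frac{|\mathscr{N}_i|}{1-\alpha_i}\|d_i\|^2$; bounding $\sum_{\{i,j\}}\|d_i - d_j\|^2$ via $-\langle d_i, d_j\rangle \leq \tfrac{1}{2}(\|d_i\|^2+\|d_j\|^2)$ then leaves a coefficient of at least $\frac{4|\mathscr{N}_i|\alpha_i}{1-\alpha_i} + 4 \geq 4$ on each $\|d_i\|^2$, which gives the desired factor.

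\textbf{Main obstacle.} The difficult part is the group case: correctly matching the edge-wise quadratic forms through the per-vertex regrouping, and in particular verifying that the $\epsilon^2$-cap absorbs the contributions from edges that cross the opinion threshold between $t$ and $t+1$ so that the naive ``both-in'' bound $\|y_i - y_j\|^2 - \|y_i^* - y_j^*\|^2$ remains a valid lower bound for every edge in $E$.
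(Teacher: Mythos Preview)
The paper does not supply its own proof of this lemma; it is quoted from~\cite{mHK} and used as a black box, so there is no in-paper argument to compare against.

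Your argument is essentially the standard one and is correct. For pair interaction, the centroid-preservation reduction together with the per-pair identities $\|y_i\|^2+\|y_j\|^2-\|y_i^*\|^2-\|y_j^*\|^2=2\mu(1-\mu)\|y_i-y_j\|^2$ and $\|d_i\|^2+\|d_j\|^2=2\mu^2\|y_i-y_j\|^2$ gives exactly the claimed factor, and the bound $n(1-\mu)\ge 2\mu$ is immediate for $n\ge 2$, $\mu\le 1/2$. For group interaction, your edge-splitting (non-$E$ pairs contribute the constant $\epsilon^2$; edges in $E\setminus\mathscr{E}(t)$ contribute a nonnegative decrement; edges in $E\cap\mathscr{E}(t)$ are bounded below by the uncapped difference $\|y_i-y_j\|^2-\|y_i^*-y_j^*\|^2$), followed by the per-vertex regrouping through the update identity $\sum_{k\in\mathscr{N}_i}(y_i-y_k)=\frac{|\mathscr{N}_i|}{1-\alpha_i}d_i$, is exactly the right route, and the final coefficient $\frac{|\mathscr{N}_i|\alpha_i}{1-\alpha_i}+1\ge 1$ checks out.

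One wording issue to fix: your ``Main obstacle'' paragraph asserts that the bound $\|y_i-y_j\|^2-\|y_i^*-y_j^*\|^2$ must be valid for \emph{every} edge in $E$. That is false in general for an edge in $E\setminus\mathscr{E}(t)$, since the time-$t$ contribution there is $\epsilon^2<\|y_i-y_j\|^2$; but your actual argument (correctly) never uses it there --- you bound those edges below by $0$ and apply the uncapped bound only on $E\cap\mathscr{E}(t)$, which is precisely what the regrouping over $\mathscr{N}_i$ requires. Rewrite the summary to match what you actually do, and the proof is complete.
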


Letting $n\to\infty$ on 
$$\begin{array}{l}
    \displaystyle Z_p(t)\geq Z_p(t+1)+4\sum_{i\in [n]}\|x_i(t)-x_i(t+1)\|^2,  \\
    \displaystyle Z_g(t)\geq Z_p(t+1)+4\sum_{i\in [n]}\|x_i(t)-x_i(t+1)\|^2, 
\end{array}$$ we derive Lemma~\ref{supermartingale for countable individuals}. 
\begin{lemma}\label{supermartingale for countable individuals}
    In a system of countable individuals, there is a nonnegative function $Z_t$ nonincreasing to $t$ such that $$Z_t\geq Z_{t+1}+4\sum_{i\geq 1}\|x_i(t)-x_i(t+1)\|^2$$ for group interaction, and pair interaction if all interacting pairs approach each other equally at their rate.
\end{lemma}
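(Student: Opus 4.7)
The plan is to derive $Z_t$ as the $n\to\infty$ limit of the finite-system supermartingales from Lemma~\ref{supermartingale for a finite number of agents}, which is exactly the route announced in the paragraph just before the statement.

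For each $n\geq 1$, I would set $Z^{(n)}(t)$ to be $Z_p(t)$ (pair interaction with equal approach) or $Z_g(t)$ (group interaction) as defined in Lemma~\ref{supermartingale for a finite number of agents}, with the sums indexed over $i,j\in[n]$ but evaluated along the trajectories of the countable system. Applying Lemma~\ref{supermartingale for a finite number of agents} to the $[n]$-truncation yields $Z^{(n)}(t)\geq Z^{(n)}(t+1)+4\sum_{i\in[n]}\|x_i(t)-x_i(t+1)\|^2$, which in particular shows $Z^{(n)}$ is nonincreasing in $t$. Because enlarging $n$ only appends nonnegative terms to the defining sums, both $Z^{(n)}(t)$ and $\sum_{i\in[n]}\|x_i(t)-x_i(t+1)\|^2$ are monotone nondecreasing in $n$, so their pointwise limits $Z_t:=\lim_{n\to\infty}Z^{(n)}(t)\in[0,+\infty]$ and $\sum_{i\geq 1}\|x_i(t)-x_i(t+1)\|^2$ exist. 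Passing to the limit on both sides of the finite inequality gives the desired bound; nonnegativity and monotonicity of $Z_t$ in $t$ are inherited from $Z^{(n)}$.

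The main obstacle is that Lemma~\ref{supermartingale for a finite number of agents} was established for a self-contained system of $n$ agents, whereas in the countable process the updates of agents in $[n]$ can involve agents in $\mathbb{Z}^+\setminus[n]$. Justifying the intermediate step therefore requires a termwise re-examination of the finite-lemma's proof, checking that every interaction at time $t$ contributes nonnegatively to $Z^{(n)}(t)-Z^{(n)}(t+1)-4\sum_{i\in[n]}\|x_i(t)-x_i(t+1)\|^2$. For pair interaction, interior pairs $(i,j)\in[n]^2$ are handled by the finite lemma verbatim, while for a boundary pair $(i,j)$ with $i\in[n]$ and $j\notin[n]$ one uses a convex-combination estimate to show that the change in $\|x_i-x_k\|^2$ for $k\in[n]$ is controlled by $\|x_i-x_j\|^2$ in the same direction. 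Group interaction is handled analogously, with the $\wedge\,\epsilon^2$ truncation in the definition of $Z_g$ absorbing any large fluctuations coming from neighbors outside $[n]$. Once this local check is in place, the limit argument is routine.
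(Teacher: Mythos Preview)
Your approach is exactly the paper's: the paragraph preceding the lemma is its entire proof, and it consists of nothing more than passing to the limit $n\to\infty$ in the inequalities of Lemma~\ref{supermartingale for a finite number of agents}. Your third paragraph in fact goes further than the paper by flagging the boundary-interaction issue (agents in $[n]$ updating via neighbors outside $[n]$), which the paper does not address at all; your sketch for resolving it is in the right spirit but would need to be made precise, as the paper offers no help here.
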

With the nonnegative supermartingales on infinite graphs, we obtain $x_i(t)-x_i(t+1)\to 0$ as $t\to\infty$ for all $i\geq 1$ for group interaction and pair interaction if all interacting pairs approach each other equally at their rate in Lemma~\ref{Differences}.

\begin{lemma}\label{Differences}
    We get $x_i(t)-x_i(t+1)\to 0$ as $t\to\infty$ for all $i\geq 1$ for pair interaction if all interacting pairs approach each other equally at their rate, and group interaction.
\end{lemma}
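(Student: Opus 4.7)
The plan is to extract the conclusion almost immediately from Lemma~\ref{supermartingale for countable individuals} by invoking the monotone convergence theorem for real sequences. The supermartingale lemma gives a pathwise nonnegative function $Z_t$ that is nonincreasing in $t$ and dominates the telescoping quantity $4\sum_{i\geq 1}\|x_i(t)-x_i(t+1)\|^2$. This is exactly the structure needed.

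First I would note that, since $Z_t\geq 0$ and $Z_{t+1}\leq Z_t$ holds pathwise for every $t$, the sequence $(Z_t)_{t\geq 0}$ converges to some finite limit $Z_\infty\geq 0$. Consequently the consecutive differences satisfy $Z_t-Z_{t+1}\to 0$ as $t\to\infty$. Then I would apply the inequality
\begin{equation*}
0\leq 4\sum_{i\geq 1}\|x_i(t)-x_i(t+1)\|^2\leq Z_t-Z_{t+1},
\end{equation*}
which forces the whole nonnegative series $\sum_{i\geq 1}\|x_i(t)-x_i(t+1)\|^2$ to tend to $0$. Finally, since for each fixed $i\geq 1$ the single term $\|x_i(t)-x_i(t+1)\|^2$ is bounded above by this series, it also tends to $0$, and hence $x_i(t)-x_i(t+1)\to 0$.

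There is no substantive obstacle here: the hard work is already encapsulated in Lemma~\ref{supermartingale for a finite number of agents} and its extension to infinitely many agents in Lemma~\ref{supermartingale for countable individuals}. The only subtlety worth mentioning is that the argument is pathwise — the inequality $Z_t\geq Z_{t+1}+4\sum_{i\geq 1}\|x_i(t)-x_i(t+1)\|^2$ holds for every realization of the random sequences $(\alpha_i(t))$ and $(U_t)$, so the conclusion $x_i(t)-x_i(t+1)\to 0$ holds in the same pointwise sense, uniformly in the choice of interaction mode covered by the hypotheses.
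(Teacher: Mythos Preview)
Your proof is correct and follows essentially the same approach as the paper: invoke Lemma~\ref{supermartingale for countable individuals}, use convergence of $Z_t$ to force the increments $Z_t-Z_{t+1}$ to vanish, and then read off the termwise conclusion from the squeeze $0\leq 4\|x_i(t)-x_i(t+1)\|^2\leq Z_t-Z_{t+1}$. The only cosmetic difference is that the paper cites the martingale convergence theorem whereas you use the more elementary monotone convergence of bounded real sequences, which is entirely appropriate here since $Z_t$ is pathwise nonincreasing and nonnegative.
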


\begin{proof}
    It follows from Lemma~\ref{supermartingale for countable individuals} that there is a nonnegative supermartingale $Z_t$ with 
    \begin{equation}\label{eq: supermartingale}
        Z_t\geq Z_{t+1}+\sum_{i\geq 1}\|x_i(t)-x_i(t+1)\|^2.
    \end{equation} By the martingale convergence theorem, $Z_t$ approaches a random variable $Z_\infty$ with finite expectation. Applying $t\to\infty$ on~\eqref{eq: supermartingale}, we derive $x_i(t)-x_i(t+1)\to 0$ as $t\to\infty$ for all $i\geq 1.$ 
\end{proof}

\begin{proof}[\bf Proof of Theorem~\ref{Thm: 1-dim asymptotic stability}]
Without loss of generality, assume that $(x_i(s))_{i\geq 1}$ is nonincreasing. Via Lemma~\ref{order preserving}, $(x_i(t))_{i\geq 1}$ is nonincreasing for all $t\geq s$. For $t\geq s$ and $i> 1$, we obtain
\begin{align*}
    x_1(t+1)-x_1(t)&=(1-\alpha_1(t))\frac{x_2(t)-x_1(t)}{2}\leq 0,\\
    x_i(t+1)-x_i(t)&=(1-\alpha_i(t))\big(\frac{x_{i-1}(t)-x_i(t)}{3}+\frac{x_{i+1}(t)-x_i(t)}{3}\big)\\
    &=\frac{1-\alpha_i(t)}{3}[-\Delta x_{i-1}(t)+\Delta x_i(t)]\geq 0.
\end{align*}
It turns out from Lemma~\ref{order preserving} that $x_1(t)\geq x_2(t)\geq x_2(s)$ and $x_i(t)\leq x_1(t)\leq x_1(s)$ for all $i> 1.$ Since $(x_1(t))_{t\geq s}$ is nonincreasing and bounded below by $x_2(s)$ and $(x_i(t))_{t\geq s}$ is nondecreasing bounded above by $x_1(s)$ for all $i>1$, $x_i$ is asymptotically stable for all $i\geq 1.$ 
\end{proof}
Considering $x_i(0)=1/i$, threshold $\epsilon=1/2$ and social graph $G$ a path with edges $(i,i+1)$ for all $i\geq 1$, observe that profile $G\cap\mathscr{G}(0)=G$ corresponds to $(x_i(0))_{i\geq 1}$ nonincreasing and $(\Delta x_i(0))_{i\geq 1}$ nondecreasing. By Theorem~\ref{Thm: 1-dim asymptotic stability}, $x_i$ is asymptotically stable for all $i\geq 1.$ In particular, we obtain 
\begin{align*}
    &1/2=x_2(0)\leq\lim_{t\to\infty}x_1(t)\leq x_1(0)=1,\\
    &1/i=x_i(0)\leq\lim_{t\to\infty}x_i(t)\leq x_1(0)=1\ \hbox{for all}\ i>1.
\end{align*}
While zero serves as the limit point for all opinions at the initial time, none of their opinion limit falls on it.
\begin{proof}[\bf Proof of Theorem~\ref{Thm: consensus of mHK under group interaction}]
    $\liminf_{t\to\infty}\alpha_t<1$ implies $\limsup_{t\to\infty}(1-\alpha_t)>0$. There are $(t_k)_{k\geq 0}$ increasing with $t_0\geq s$ and $1>\delta>0$ such that $1-\alpha_{t_k}\geq \delta$ for all $k\geq 0.$ $3|N_i^s\cap N_j^s|> 2r$ implies $3|N_i^s\cap N_j^s|\geq 2r+1.$ Let $A_t=\sup_{(i,j)\in E}\|x_i(t)-x_j(t)\|$. For $t>s$, $t_{k}<t\leq t_{k+1}$ for some $k\geq 0.$ Via Lemma~\ref{regular preserving}, $A_t\leq (1-\delta/r)^k A_0.$ As $t\to\infty$, $k\to\infty.$ It follows that $\limsup_{t\to\infty} A_t\leq 0$, therefore $\lim_{t\to\infty}A_t=0.$
\end{proof}

\begin{proof}[\bf Proof of Theorem~\ref{Thm: consensus of mHK under pair interaction}]
    We claim that $\|x_i(t)-x_j(t)\|\to 0$ as $t\to\infty$ for all edges $(i,j)\in E.$ If this is not the case, there are $\delta>0$, $(t_k)_{k\geq 0}$ increasing and edge $(i,j)\in E$ such that $\|x_i(t_k)-x_j(t_k)\|\geq\delta$ for all $k\geq 0.$ Since $\bigcup_{a\in S}a\supset E$ and $$0\leq \sup_{t\geq 0} \sup \{\alpha_i (t):i\geq 1\ \hbox{and}\ \alpha_i(t)<1\}< 1,$$ by Borel-Cantelli lemma, there are $(s_k)_{k\geq 0}\subset (t_k)_{k\geq 0}$ and $1/2\geq \gamma>0$ with $\mu(s_k)\geq\gamma$ for all $k\geq 0$ such that $$x_i(s_k+1)-x_i(s_k)=\mu(s_k)(x_j(s_k)-x_i(s_k))\gg \gamma(x_j(s_k)-x_i(s_k)).$$ It follows from Lemma~\ref{Differences} that $x_j(s_k)-x_i(s_k)\to 0$ as $k\to\infty,$ contradicting $\|x_i(t_k)-x_j(t_k)\|\geq\delta$ for all $k\geq 0.$ For any two vertices in $G,$ there is a finite path connecting the two. By the triangle inequality, $\|x_i(t)-x_j(t)\|\to 0$ as $t\to\infty$ for all $i,j\geq 1.$
\end{proof}

\section{Statements and Declarations}
\subsection{Competing Interests}
The author is partially supported by NSTC grant.

\subsection{Data availability}
No associated data was used.
%\nocite{*}
%\bibliographystyle{abbrv}
%\bibliography{ref1.bib}

\end{document}